\newtheorem{theorem}{Theorem}
\newtheorem{conjecture}[theorem]{Conjecture}
\newtheorem{question}[theorem]{Question}
\newtheorem{remark}[theorem]{Remark}
\begin{document}
	
\title{On odd torsion in even Khovanov homology}
\author{Sujoy Mukherjee}
\address{Department of Mathematics, George Washington University, Washington DC, USA}
\address{sujoymukherjee.math@gmail.com}

\subjclass[2010]{Primary: 57M25 and Secondary: 57M27.}

\date{January 14, 2018 and, in revised form, \today.}

\keywords{knots and links; even Khovanov homology; torsion.}
	
\begin{abstract}
	
	This short note resolves the most important part of the PS braid conjecture while introducing the first known examples of knots and links with odd torsion of order 9, 27, 81, and 25 in their even Khovanov homology.
	
\end{abstract}

\maketitle

\hspace*{25pt}In 1999, the Jones polynomial \cite{Jon} was categorified into Khovanov homology (KH) \cite{Kho}. A bigraded chain complex is associated to a link whose homology is an invariant of the link itself. Additionally, the Euler characterictic of this chain complex, interpreted appropriately, is the Jones polynomial. KH is a powerful link invariant. In particular, it detects the unknot, which, for the Jones polynomial is still an open question \cite{KM}. KH has been used to prove the Milnor unknotting conjecture combinatorially among other applications \cite{Ras}. 

\

\hspace*{25pt}A wealth of information, in the form of torsion subgroups, is obtained from the KH of a link which has no contribution towards the Euler characteristic of the chain complex and hence cannot be obtained from just the Jones polynomial in general. It is thought that KH inherits $\mathbb{Z}_2$ torsion from its nature of construction and hence it is not surprising that almost all knots and links have this group in their KH \cite{AP,Shu}.

\

\hspace*{25pt}Things become more interesting and significantly harder when one tries to search for torsion subgroups of higher order. Not only are these extremely rare to find, computational limitations create roadblocks frequently. Until the article \cite{MPSWY}, only a finite examples of knots and prime links were known with non-$\mathbb{Z}_2$ torsion in their KH. In the aforementioned article, infinite families of knots and prime links containing $\mathbb{Z}_3,$ $\mathbb{Z}_4,$ $\mathbb{Z}_5,$ $\mathbb{Z}_7,$ and $\mathbb{Z}_8$ torsion in their KH were introduced. These families consisted of torus links with twists. In addition to this, links containing large torsion subgroups in their KH were also introduced. The orders of these subgroups are all powers of two. This led to the present quest of trying to find knots and links with large torsion subgroups of odd order in their KH. The goal of this article is to trigger a study of odd torsion in even KH while keeping a very important question in mind.

\

\begin{center}What does torsion in KH signify?\end{center}

\

\hspace*{25pt} For completeness, a very brief description of KH is first provided below. For more details, the reader is directed to the articles \cite{BN,Tur,Vir}. This is followed by the discussion on the PS braid conjecture. The final part of the paper introduces knots and links with odd torsion of order 9, 27, 81, and 25 in their even KH.

\begin{figure}[ht]
	
	\centering
	\includegraphics[width = 0.8\textwidth]{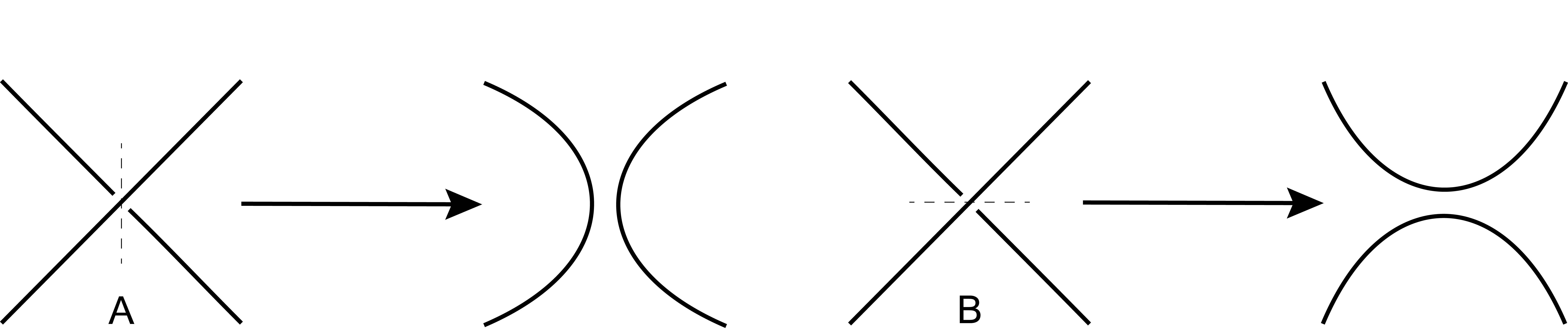}
	\caption{Smoothing conventions for Kauffman states.}
	\label{smoothing_convention}
	
\end{figure}

\section{Introduction}

\hspace*{25pt}For $D$, an unoriented diagram of a link, let $cro(D)$ denote its set of crossings. $s: cro(D) \longrightarrow \{A,B\}$ is called a Kauffman state of $D$. Let $\mathcal{KS}(D)$ denote the set of all Kauffman states. Further, let $\sigma = \sigma(s) = |s^{-1}(A)| - |s^{-1}(B)|$ for all $s \in \mathcal{KS}(D)$.

\
	
\hspace*{25pt}Denote by $cir(s)$, the set of circles obtained from a Kauffman state $s$ by smoothing the crossings of $D$ according to the convention described in Figure \ref{smoothing_convention} and let $t: cir(s) \longrightarrow \{-1,1\}.$ $t$ is called an enhanced Kauffman state of $D$. Denote the set of enhanced Kauffman states of a link diagram $D$ by $\mathcal{EKS}(D).$ For each enhanced Kauffman state $t,$ let $\tau = \tau(t) = \sum\limits_{c \in cir(s)} t(c).$

\
	
\hspace*{25pt}For an enhanced Kauffman state $t$ obtained from a Kauffman state $s,$ let $(a(t),b(t)) = (\sigma(s),\sigma(s) + 2\tau(t)).$ Now, let $C_{a,b}(D)$ be the free Abelian group with generators: $\{t \mid t \in \mathcal{EKS(D)}$ with $a(s) = a$ and $b(t) = b\}.$

\
	
\hspace*{25pt}Let $t$ and $t'$ be enhanced Kauffman states. Then, $t'$ is said to be adjacent to $t,$ if $a(t) = a(t') + 2,$ $b(t) = b(t')$ and the Kauffman states corresponding to $t$ and $t'$ differ at exactly one crossing with $t$ having an $A$ value and $t'$ having a $B$ value assigned to the crossing.

\
	
\hspace*{25pt}Finally, order the crossings of $D$ and for a fixed integer $b,$ consider the descendant complex:
		$$
		\cdots \longrightarrow C_{a+2,b}(D) \stackrel{\partial_{a+2,b}}{\longrightarrow} C_{a,b}(D) \stackrel{\partial_{a,b}}{\longrightarrow} C_{a-2,b}(D) \longrightarrow \cdots
		$$
		with differential $\partial_{a,b}(t) = \sum\limits_{t' \in \mathcal{EKS}} (t:t') t'.$ If $t'$ is not adjacent to $t$, $(t:t') = 0$ and otherwise $(t:t') = (-1)^\omega$, with $\omega$ the number of $B$-labeled crossings in $t$ coming after the crossing at which the Kauffman states corresponding to $t$ and $t'$ differ.
		
\

\hspace*{25pt}With the notation above, the groups $ H_{a,b}(D)=\frac{\textnormal{ker} (\partial_{a,b})}{\textnormal{im}(\partial_{a+2,b})}$ are invariant under Reidemeister moves II and III. These groups categorify the unreduced Kauffman bracket polynomial \cite{Kau} and are called the framed Khovanov homology groups.

\
	
\hspace*{25pt}Let $\vec D$ be the diagram obtained after fixing an orientation for $D$ and let $w = w(\vec D)$ be its writhe. Then, the classical Khovanov (co)homology of $\vec D$ can be obtained from the framed Khovanov homology in the following way: $$H^{i,j}(\vec D) = H_{w-2i,3w-2j}(D).$$

\hspace*{25pt}To avoid cumbersome notation, in the remaining part of this article, $D$ (instead of $\vec D$) is used to denote an oriented link diagram.

\begin{figure}[ht]
	
	\centering
	\includegraphics[width = 0.3\textwidth]{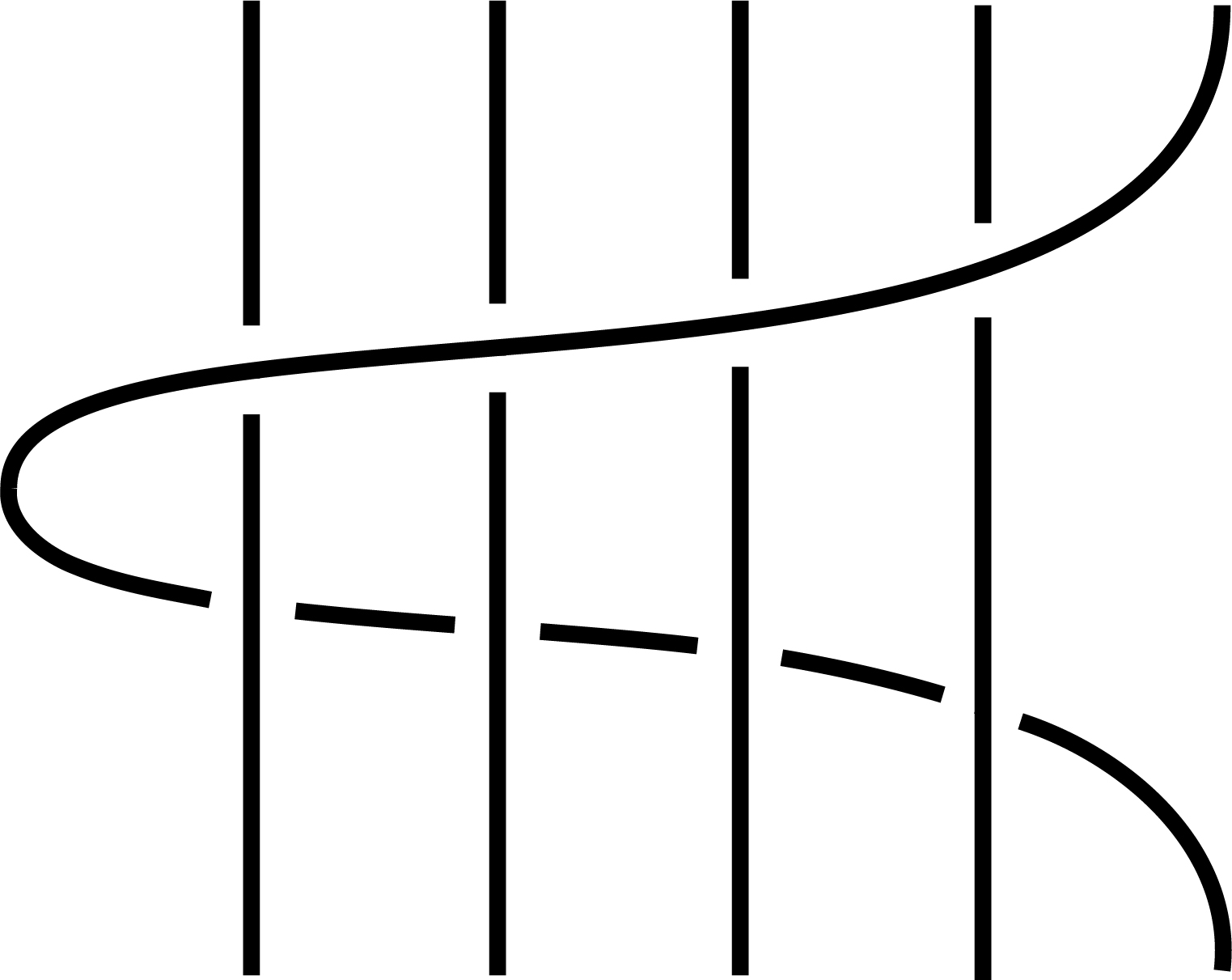}
	\caption{The element $w_{1,4} = \sigma_1 \sigma_2 \sigma_3 \sigma_4 \sigma_4 \sigma_3 \sigma_2 \sigma_1 \in \mathcal{B}_5.$}
	\label{wrap_braid}
	
\end{figure}

\hspace*{25pt}Most of the previously known examples of knots and links with interesting torsion are derived from torus links. In fact, it is conjectured that torus knots of type $(p,p+1),$ where $p$ is a prime, contain $\mathbb{Z}_p$ torsion. The trend of relying on derivatives of torus links for torsion in KH continues in this note too, however, with modifications to torus links in a different way. In fact, the author hopes that the KH data of the links in these families will encourage the readers to compute their other invariants.

\

\hspace*{25pt}Let $\mathcal{B}_n$ denote the braid group with $(n-1)$ generators. Further, let $0 \leq i \neq j < n$. For $i<j,$ denote by $w_{i,j},$ the braid word: $\sigma_i \sigma_{i+1} \cdots \sigma_j \sigma_j \sigma_{j-1} \cdots \sigma_i$ and similarly, by $w_{i,j}^{-1},$ the braid word: $\sigma_i^{-1} \sigma_{i+1}^{-1} \cdots \sigma_j^{-1} \sigma_j^{-1} \sigma_{j-1}^{-1} \cdots \sigma_i^{-1}.$ Figure \ref{wrap_braid} shows $w_{1,4} \in \mathcal{B}_5.$ Analogously, for $i>j,$ denote by $w_{i,j},$ the braid word: $\sigma_i \sigma_{i-1} \cdots \sigma_j \sigma_j \sigma_{j+1} \cdots \sigma_i$ and by $w_{i,j}^{-1},$ the braid word: $\sigma_i^{-1} \sigma_{i-1}^{-1} \cdots \sigma_j^{-1} \sigma_j^{-1} \sigma_{j+1}^{-1} \cdots \sigma_i^{-1}.$ Compare twisted torus links in \cite{BK} with the closure of: $$w_{1,n-1} = \sigma_ 1 \sigma_2 \cdots \sigma_{n-1} \sigma_{n-1} \sigma_{n-2} \cdots \sigma_1 = (\sigma_ 1 \sigma_2 \cdots \sigma_{n-1})^n(\sigma_ 1 \sigma_2 \cdots \sigma_{n-2})^{-(n-1)} \cdots \sigma_1^{2(-1)^n} \in \mathcal{B}_n.$$

\section{The PS braid conjecture}

\hspace*{25pt}In 2012, the following conjecture addressing the potential connection between the torsion subgroups that appear in the KH of a closed braid and its braid index was posed by Przytycki and Sazdanovi\'{c} \cite{PS}.

\begin{conjecture}\
	
	\begin{enumerate}
		\item{KH of a closed $3$-braid can have only $\mathbb{Z}_2$ torsion.}
		\item{KH of a closed $4$-braid cannot have odd torsion.}
		\item{KH of a closed $4$-braid can have only $\mathbb{Z}_2$ and $\mathbb{Z}_4$ torsion.}
		\item{KH of a closed $n$-braid cannot have $\mathbb{Z}_p$ torsion for $p>n,$ where $p$ is prime.}
		\item{KH of a closed $n$-braid cannot have $\mathbb{Z}_{p^r}$ torsion for $p^r > n.$}
	\end{enumerate}

\end{conjecture}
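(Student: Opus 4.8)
The plan is to resolve part (5)---the most general statement, which contains part (4) as the special case $r=1$ and is therefore the key assertion---by refutation. Concretely, I will exhibit explicit closed braids on a controlled number of strands whose even Khovanov homology contains $\mathbb{Z}_{p^r}$ torsion with $p^r$ strictly larger than the number of strands. Since the closure of a braid in $\mathcal{B}_n$ has braid index at most $n$, each such example directly contradicts the bound claimed in part (5). The targets are precisely the odd prime powers $9=3^2$, $27=3^3$, $81=3^4$, and $25=5^2$, so the examples live in the range $r\ge 2$ that part (4) does not address.

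The construction starts from the observation, recorded above, that torus links of type $(p,p+1)$ are expected to carry $\mathbb{Z}_p$ torsion, together with the fact that the wrap braids $w_{i,j}$ give a flexible way to insert twists into a torus-link braid without increasing the strand count. First I would form candidate families by inserting powers of the $w_{i,j}$ (and their inverses) into the standard torus braid $(\sigma_1\sigma_2\cdots\sigma_{n-1})^m$, so that each closure is a twisted torus link on $n$ strands. The design goal is to amplify the $\mathbb{Z}_p$ torsion of the underlying torus link into higher prime-power torsion $\mathbb{Z}_{p^r}$ while holding $n$ fixed and as small as possible, so that the inequality $p^r>n$ can be met.

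Next, for each candidate I would compute the even Khovanov homology $H^{i,j}$ over $\mathbb{Z}$, and then certify the precise torsion by comparing the ranks of the homology with $\mathbb{Z}/p$ and $\mathbb{Z}/p^k$ coefficients through the universal coefficient theorem. Producing any one of $9,27,81,25$ as a genuine summand in the homology of a braid whose strand number is smaller than that order completes the refutation: one explicit $\mathbb{Z}_{p^r}$ summand with $p^r>n$ is already a counterexample to part (5).

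The hard part is twofold. Computationally, the even Khovanov homology of these twisted torus links grows very rapidly with the number of strands and crossings, so the space of candidate braids must be navigated with care---and the successful examples pushed far enough---to keep the integral computation feasible. Conceptually, detecting $p$-torsion is \emph{not} enough: one must certify that the torsion order is exactly $p^r$, i.e.\ that the homology carries a true $\mathbb{Z}_{p^r}$ summand rather than several copies of $\mathbb{Z}_p$. This distinction is exactly what separates a counterexample to part (5) (with $r\ge 2$) from the already-expected appearance of small prime torsion, and pinning it down requires the finer comparison over $\mathbb{Z}/p^k$ for $k=1,\dots,r$. I expect this certification of the exact torsion order, carried out simultaneously with holding the braid index below $p^r$, to be the main obstacle.
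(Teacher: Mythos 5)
Your overall method---exhibit explicit twisted torus braids on few strands, compute their integral Khovanov homology, and certify the exact order of the torsion summands---is the same computational strategy the paper employs throughout. The genuine gap is in which part of the conjecture you target. You aim at part (5), calling it ``the key assertion'' because it contains part (4) as the case $r=1$, and you plan to refute it with prime-power torsion of orders $9$, $27$, $81$, and $25$. But refuting a more general universally quantified statement does not refute its special cases: a $\mathbb{Z}_{3^2}$ summand in the KH of a closed $5$-braid violates part (5), yet says nothing about part (4), which forbids $\mathbb{Z}_p$ torsion for \emph{prime} $p>n$. Worse, part (5) was already known to be false before this work---the paper itself recalls that the family of links with torsion of order $2^s$ (a prominent member being the flat two-cabling of $8_{19}$) disposed of it. So your proposal, if carried out, reproves a known negative result and leaves untouched the part the paper actually resolves and advertises as the most important one.

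What part (4) requires is honest prime torsion exceeding the braid index. The paper's counterexample is the closure of $(\sigma_1\sigma_2\sigma_3\sigma_4\sigma_5)^7 \cdot w_{1,5} \in \mathcal{B}_6$, a knot whose KH contains $\mathbb{Z}_7$ torsion in bigradings $(23,71)$ and $(24,75)$; since $7$ is prime and $7>6$, this kills part (4). Your intended examples with orders $9$, $27$, $81$, and $25$ do appear in the paper, but as a separate theorem recording new odd torsion in even KH (and, incidentally, as odd-order counterexamples to part (5)), not as the resolution of the conjecture. Your secondary concern---that one must distinguish a genuine $\mathbb{Z}_{p^r}$ summand from several copies of $\mathbb{Z}_p$ via coefficient comparisons---is legitimate and relevant to those examples, but it does not repair the logical mismatch between the part you refute and the part that needed refuting.
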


\begin{figure}[ht]
	
	\centering
	\includegraphics[width = \textwidth]{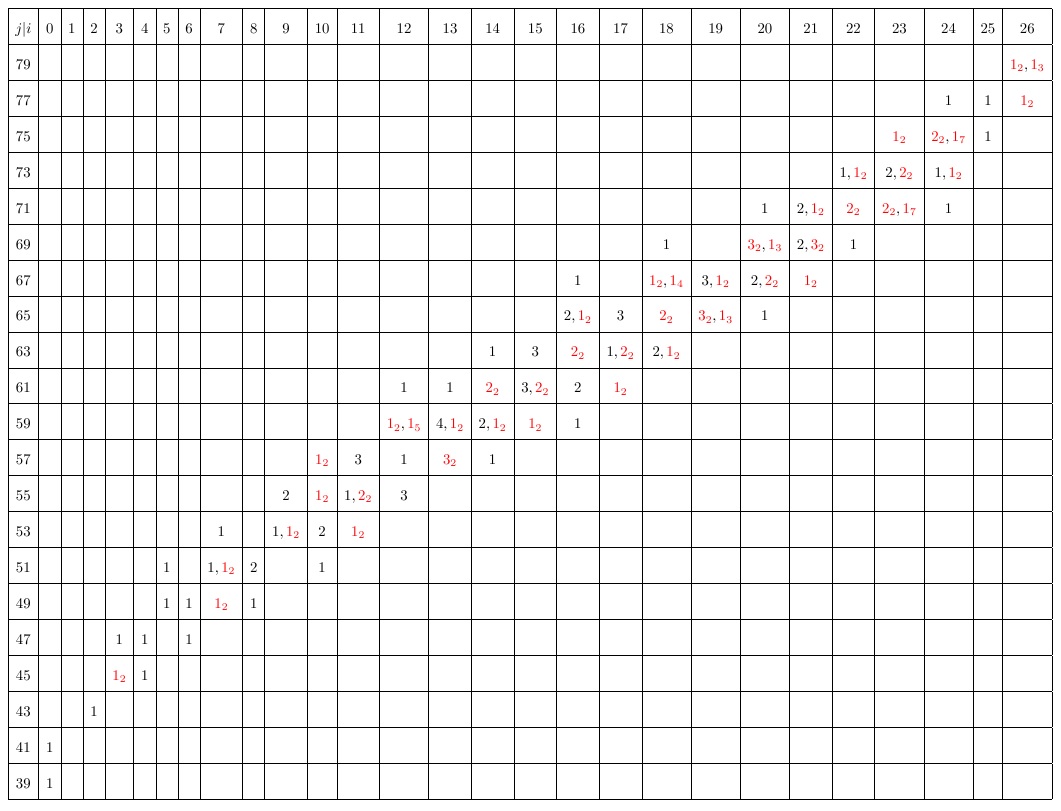}
	\caption{KH of the closure of $(\sigma_1\sigma_2\sigma_3\sigma_4\sigma_5)^7 \cdot w_{1,5}.$}
	\label{KH_with_7}
	
\end{figure}

\hspace*{25pt}It is believed that the first part of the PS braid conjecture is true. In fact, it has been proven to be true for four out of the seven families of $3$-braids \cite{CLSS,Mur}. In particular, these infinite families contain the torus links $T(3,n).$ Parts two and three were disproved by the closure of the braid: $w_{1,3}^5 \in \mathcal{B}_4$ \cite{MPSWY}. The last part was disproved by the family of links with large torsion subgroups of even order in their KH. A prominent member of this family is the flat two cabling of the smallest non-alternating knot $8_{19}.$ The final part of the PS braid conjecture is resolved by the following theorem \cite{Muk}.

\begin{theorem}\

\hspace*{25pt}The part of the PS braid conjecture stating that the order of torsion subgroups, when prime, appearing in the KH of a closed braid is bounded above by its braid index, is false. 

\end{theorem}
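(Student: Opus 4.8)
The statement asserts that part (4) of the PS braid conjecture---that a closed $n$-braid cannot carry $\mathbb{Z}_p$ torsion in its Khovanov homology for a prime $p > n$---is false. Because this is a universally quantified negative claim, it suffices to produce a single explicit counterexample: a closed braid on $n$ strands whose even Khovanov homology contains $\mathbb{Z}_p$ torsion for some prime $p > n$. The plan is to locate such a link inside the torus-link-with-twists family set up above and to extract the required torsion from an explicit integral homology computation. First I would take the braid $\beta = (\sigma_1\sigma_2\sigma_3\sigma_4\sigma_5)^7 \cdot w_{1,5} \in \mathcal{B}_6$, whose Khovanov homology is tabulated in Figure \ref{KH_with_7}. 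The factor $(\sigma_1\sigma_2\sigma_3\sigma_4\sigma_5)^7$ is the $(6,7)$ torus braid, so the closure is a twisted variant of $T(6,7)$; torus links of this shape are exactly the known source of odd torsion, which is why $\beta$ is the natural candidate.

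The structural point that makes the example decisive is that $\beta$ lives in $\mathcal{B}_6$, so its closure is presented as a closed $6$-braid and therefore has braid index at most $6$. Consequently, since $7$ is prime and $7 > 6$, it is enough to exhibit $\mathbb{Z}_7$ torsion in the Khovanov homology of the closure in order to contradict the assertion---no computation of the exact braid index is even needed. Concretely, I would compute the Betti numbers of the Khovanov chain complex over $\mathbb{Q}$ and over the field $\mathbb{F}_7$ in each bidegree; wherever the $\mathbb{F}_7$-dimension exceeds the $\mathbb{Q}$-dimension, the universal coefficient theorem forces a $\mathbb{Z}_7$ summand in the integral homology, and a single such bidegree already closes the argument.

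The hard part will be computational rather than conceptual. The diagram underlying $\beta$ carries $5 \cdot 7 + 10 = 45$ crossings, so a direct cube-of-resolutions calculation is infeasible, and the delicate torsion can be lost to unreliable integer arithmetic. I would control the blowup by running the complex through Bar-Natan's divide-and-conquer algorithm with local cobordism simplification (equivalently, iterated Gaussian elimination and discrete-Morse reductions) before passing to Smith normal form over $\mathbb{Z}$, and I would cross-check the $\mathbb{Z}_7$ summand against the independent $\mathbb{F}_7$-versus-$\mathbb{Q}$ rank comparison. Exploiting the periodicity of the torus factor $(\sigma_1\sigma_2\sigma_3\sigma_4\sigma_5)^7$ to keep the working complex small is the key to making the computation terminate with trustworthy integral output, at which point reading off the $\mathbb{Z}_7$ torsion from Figure \ref{KH_with_7} completes the disproof.
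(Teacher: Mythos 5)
Your proposal is correct and follows essentially the same route as the paper: it exhibits the very same counterexample, the closure of $(\sigma_1\sigma_2\sigma_3\sigma_4\sigma_5)^7 \cdot w_{1,5} \in \mathcal{B}_6$, and reads off the $\mathbb{Z}_7$ torsion (which the paper locates in bigradings $(23,71)$ and $(24,75)$ of Figure \ref{KH_with_7}) to contradict part (5) of the conjecture, since $7 > 6$ is prime. The additional discussion of computational strategy is sensible but not part of the paper's argument, which simply cites the computed table.
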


\begin{proof}\

\hspace*{25pt}Consider the braid: $(\sigma_1\sigma_2\sigma_3\sigma_4\sigma_5)^7 \cdot w_{1,5} \in \mathcal{B}_6$. The closure of this braid is a knot and its KH contains $\mathbb{Z}_7$ torsion in bigradings $(23,71)$ and $(24,75)$ as seen in Figure \ref{KH_with_7}. 
\end{proof}

\begin{remark}\

\hspace*{25pt}There are braids with smaller crossing numbers and braid indexes whose closures are counterexamples to the last part of the PS braid conjecture. However, all such known examples are links. One of them is: $(\sigma_1\sigma_2\sigma_3\sigma_4)^5 \cdot w_{1,4}^6 \in \mathcal{B}_5.$ The closure of this braid contains $\mathbb{Z}_7$ in its KH.

\end{remark}

\section{More knots and links with odd torsion in KH}

\hspace*{25pt}After \cite{MPSWY}, the state of affairs regarding odd torsion subgroups in KH was that infinitely many examples of odd torsion subgroups upto $\mathbb{Z}_7$ were known and the next higher one, meaning $\mathbb{Z}_9,$ was expected to show up in the KH of the torus knot of type $(9,10).$ However, the KH of this knot is beyond reach even at present due to limited computational abilities. The challenge, therefore, was to find a knot or link within computational reach with $\mathbb{Z}_9$ torsion in its KH. Surprisingly, not only was it possible to find knots and links with $\mathbb{Z}_9$ torsion in their KH, but also ones with $\mathbb{Z}_{27},$ $\mathbb{Z}_{81},$ and $\mathbb{Z}_{25}$ torsion in their KH.

\begin{theorem}\
	
\begin{enumerate}
		
		\item{The closure of $(\sigma_1\sigma_2\sigma_3\sigma_4)^5 \cdot w_{1,4}^5 \in \mathcal{B}_5$ contains $\mathbb{Z}_9$ torsion in its KH. Also, the connected sum of the torus knot of type $(5,6)$ with itself contains $\mathbb{Z}_9$ torsion in its KH.}
		\item{The closure of the braid: $(\sigma_1\sigma_2\sigma_3\sigma_4)^6(\sigma_4\sigma_5\sigma_6\sigma_7)^6(\sigma_7\sigma_8\sigma_9\sigma_{10})^6,$ the {\it overlapping} connected sum of the torus knot of type $(5,6)$ with itself twice, contains $\mathbb{Z}_{27}$ torsion in its KH.}
		\item{The connected sum of the closure of the braid: $(\sigma_1\sigma_2\sigma_3\sigma_4\sigma_5)^6\sigma_1\sigma_2\sigma_3\sigma_4$ with itself contains $\mathbb{Z}_{25}$ torsion in its KH.}
		
	\end{enumerate}

\end{theorem}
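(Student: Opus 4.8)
The plan is to treat all three items as computations of integral Khovanov homology and to exhibit the asserted cyclic torsion subgroups in explicit bigradings, just as in the proof of the previous theorem. For a given braid $\beta$, I would order the crossings of its closure, assemble the free groups $C_{a,b}$ from the enhanced Kauffman states, write down the differential $\partial_{a,b}$ with the signs prescribed in Section~1, and read the torsion of $H_{a,b}$ off a Smith normal form; the classical bigradings are then recovered through $H^{i,j}=H_{w-2i,\,3w-2j}$. The closure of $(\sigma_1\sigma_2\sigma_3\sigma_4)^5\cdot w_{1,4}^5$, the torus braid $(\sigma_1\sigma_2\sigma_3\sigma_4)^6$, and the closure of $(\sigma_1\sigma_2\sigma_3\sigma_4\sigma_5)^6\sigma_1\sigma_2\sigma_3\sigma_4$ all live on few strands, so the intermediate complexes stay narrow and a divide-and-conquer implementation with delooping and Gaussian elimination keeps them within reach.

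The connected-sum items need a different treatment, since their natural diagrams are far too large for a direct cube-of-resolutions computation: the $\mathbb{Z}_{27}$ braid alone has $72$ crossings on $11$ strands. Here I would first record the geometric identifications, namely that the overlapping braid $(\sigma_1\sigma_2\sigma_3\sigma_4)^6(\sigma_4\sigma_5\sigma_6\sigma_7)^6(\sigma_7\sigma_8\sigma_9\sigma_{10})^6$ is an iterated connected sum of three copies of $T(5,6)$ glued along overlapping strands, and similarly for the other two links. One then computes the integral Khovanov homology of the single factor $T(5,6)$ (respectively of the closure of $(\sigma_1\sigma_2\sigma_3\sigma_4\sigma_5)^6\sigma_1\sigma_2\sigma_3\sigma_4$), locates its $\mathbb{Z}_3$ (respectively $\mathbb{Z}_5$) torsion, and propagates this torsion across the connected sum.

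The main obstacle is precisely this last step, because the growth $\mathbb{Z}_p\rightsquigarrow\mathbb{Z}_{p^k}$ is not produced by the naive integral Künneth formula: two $\mathbb{Z}_p$ summands contribute only $\mathbb{Z}_p\otimes\mathbb{Z}_p$ and $\operatorname{Tor}(\mathbb{Z}_p,\mathbb{Z}_p)$, that is $\mathbb{Z}_p\oplus\mathbb{Z}_p$, and never $\mathbb{Z}_{p^2}$. The resolution is that the Khovanov complex of a connected sum is governed not by the tensor product over $\mathbb{Z}$ but by the Frobenius algebra $A=\mathbb{Z}[X]/(X^2)$ carried by the circle merged in the connected-sum operation; the higher $\operatorname{Tor}^A$ terms and the ensuing extension problem are what fuse the $k$ order-$p$ classes into a single cyclic group of order $p^k$. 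Concretely I would run the Bockstein spectral sequence over $\mathbb{F}_p$, verify that the length-one Bocksteins of the factors combine into a length-$k$ Bockstein, and thereby certify that the integral torsion is $\mathbb{Z}_{p^k}$ and not a direct sum of smaller cyclic groups. Proving that this extension is genuinely nontrivial is the crux; once it is in hand, the $\mathbb{Z}_9$, $\mathbb{Z}_{27}$, and $\mathbb{Z}_{25}$ assertions follow from the $\mathbb{Z}_3$ and $\mathbb{Z}_5$ torsion of the factors, with the directly computable $5$-braid closure in part~(1) serving as an independent confirmation of the $\mathbb{Z}_9$ case.
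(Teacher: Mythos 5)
The paper offers no argument for this theorem beyond direct machine computation: exactly as with the $\mathbb{Z}_7$ counterexample in the preceding theorem, the integral Khovanov homology of each link was computed outright with \textbf{JavaKh}-v2 and the torsion subgroups read off the output. Your plan to compute the small braid closures directly therefore matches the paper; the divergence is your attempt to replace computation by a K\"{u}nneth-type argument for the connected-sum cases, and that is where the proposal has genuine gaps.

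First, the link in part (2) is not a connected sum. Consecutive blocks of $(\sigma_1\sigma_2\sigma_3\sigma_4)^6(\sigma_4\sigma_5\sigma_6\sigma_7)^6(\sigma_7\sigma_8\sigma_9\sigma_{10})^6$ share \emph{two} strands (strands $4,5$, and strands $7,8$), not one; this is precisely why the author coins the term \emph{overlapping} connected sum. Its closure is therefore not $T(5,6)\,\#\,T(5,6)\,\#\,T(5,6)$, and the identification of the Khovanov complex of a connected sum with a derived tensor product over $A=\mathbb{Z}[X]/(X^2)$ does not apply to it; this $72$-crossing link has to be computed on its own. Second, and more fundamentally, the step you yourself flag as the crux --- showing that the order-$p$ classes of the factors assemble into a single $\mathbb{Z}_{p^k}$ rather than $(\mathbb{Z}_p)^k$ --- is not supplied, and no general principle can supply it, because torsion does not multiply under connected sum: $T(2,3)\,\#\,T(2,3)$ is alternating, hence homologically thin, and carries only $\mathbb{Z}_2$ torsion even though each factor already has $\mathbb{Z}_2$ torsion in its KH. Whether the extension is nontrivial is case-dependent, and deciding it for $T(5,6)\,\#\,T(5,6)$ or for the doubled braid in part (3) would require either the integral computation you are trying to avoid or a structural theorem that the paper itself does not claim --- its statement that iterated connected sums produce $\mathbb{Z}_3,\mathbb{Z}_9,\ldots,\mathbb{Z}_{3^m}$ is offered only as a conjecture, verified computationally up to $m=4$. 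As written, your argument establishes only the cases you propose to compute directly.
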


\hspace*{25pt} The article \cite{MPSWY} proposed a family of links with torsion subgroups of order $2^s$ in their KH. The following conjecture, verified upto $m = 4,$ proposes a similar family for torsion subgroups of order $3^m.$

\begin{conjecture}\

\hspace*{25 pt}The KH of the link $T(2,3) \ \#_{m} \ (\sigma_1\sigma_2\sigma_3)^4\sigma_1\sigma_2,$ for $m \in \mathbb{Z}^{+}$, contains the torsion subgroups $\mathbb{Z}_3, \mathbb{Z}_9, \ldots , \mathbb{Z}_{3^m},$ where $\#$ denotes the connected sum operation.

\end{conjecture}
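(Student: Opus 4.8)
The plan is to prove the conjecture by induction on $m$, using a Künneth-type connect-sum formula for (even) Khovanov homology together with an explicit understanding of the Khovanov homology of the building block $B$, the closure of $(\sigma_1\sigma_2\sigma_3)^4\sigma_1\sigma_2 \in \mathcal{B}_4$. First I would dispose of the base case: compute $Kh(T(2,3))$ (the trefoil, whose integral Khovanov homology is classical) and $Kh(B)$, checking directly that $T(2,3)\,\#\,B$ carries a single $\mathbb{Z}_3$ in a definite bidegree, and---crucially---recording not just the isomorphism type of the homology but the full integral module structure over the Frobenius algebra $A = \mathbb{Z}[X]/(X^2)$, since the connect-sum operation on unreduced Khovanov homology is governed by a tensor product over $A$ rather than over $\mathbb{Z}$.

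The inductive step is where the content lies. Assuming $L_k := T(2,3)\,\#_k\,B$ has torsion $\mathbb{Z}_3, \mathbb{Z}_9, \ldots, \mathbb{Z}_{3^k}$ in prescribed bidegrees, I would form $L_{k+1} = L_k \# B$ and track how the top torsion summand $\mathbb{Z}_{3^k}$ interacts with the $\mathbb{Z}_3$ contributed by the new copy of $B$. The naive obstruction must be confronted head-on: over $\mathbb{Z}$ the ordinary Künneth theorem only ever produces $\mathbb{Z}_3 \otimes \mathbb{Z}_{3^k} = \mathbb{Z}_3$ and $\mathrm{Tor}(\mathbb{Z}_3,\mathbb{Z}_{3^k}) = \mathbb{Z}_3$, so no new higher torsion can appear from a plain tensor product of complexes of abelian groups. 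The resolution is that the unreduced connect sum is \emph{not} such a tensor product; the $X$-action links adjacent quantum gradings, and the relevant book-keeping device is the mod-$3$ Bockstein spectral sequence. The order of a torsion summand is exactly the page on which its generator supports a nonzero Bockstein differential, and the Leibniz rule $\beta(a \otimes b) = \beta(a)\otimes b \pm a \otimes \beta(b)$ forces higher differentials in the spectral sequence of a tensor product even when each factor's spectral sequence collapses at $E_2$. Concretely, I would exhibit a class in $Kh(L_{k+1};\mathbb{Z}_3)$ on which the differential $d_{k+1}$ is nonzero while $d_1,\ldots,d_k$ vanish, which is precisely the signature of a $\mathbb{Z}_{3^{k+1}}$ summand.

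The main obstacle, therefore, is controlling these higher Bockstein differentials uniformly in $m$. Verifying $d_{k+1}\neq 0$ for a single value of $k$ is a finite computation---this is what underlies the confirmation up to $m=4$---but an honest proof for all $m$ requires a \emph{closed-form} description of how the Bockstein structure of $B$ multiplies. The cleanest route I can see is to isolate, inside $\widetilde{Kh}(B)$ together with its $X$-action, a small Bockstein-generating subquotient---a single $\mathbb{Z}_3$ linked to a free class by the $X$-map---and to prove that the $m$-fold tensor power of this local model over $A$ has a $\mathbb{Z}_{3^m}$ summand by a direct computation in a model Frobenius algebra, invoking the behaviour of the Bockstein spectral sequence under iterated tensor products. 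One would then need a naturality argument showing that this local model splits off Khovanov-homologically from the full connect sum, so that the $\mathbb{Z}_{3^m}$ it produces genuinely survives in $Kh(L_m)$ rather than being killed by a differential involving the remaining classes. That surviving step---separating the relevant subquotient from all the surrounding generators---is the delicate part, and is almost certainly the reason the statement has resisted a proof beyond low $m$.
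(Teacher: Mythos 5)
You have not proved the statement, and neither does the paper: this is stated as a \emph{conjecture}, with no proof offered beyond the remark that it has been ``verified upto $m=4$'' computationally. So there is no proof in the paper to compare yours against, and your text is a research program rather than an argument. Its two load-bearing steps are exactly the ones you leave open. First, the claim that for every $k$ there is a class in $Kh(L_{k+1};\mathbb{Z}_3)$ on which the Bockstein differential $d_{k+1}$ is nonzero while $d_1,\dots,d_k$ vanish is not a lemma you derive from anything --- it is a restatement of the conjecture itself in Bockstein language (a $\mathbb{Z}_{3^{k+1}}$ summand \emph{is} such a class). The Leibniz rule for the primary Bockstein does not by itself force nonvanishing of higher differentials in a tensor product; it is compatible with $d_{k+1}=0$, and ruling that out is precisely the unproven content. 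Second, the ``naturality argument showing that this local model splits off'' from the full connect sum is asserted, not supplied; without it, even a correct computation in a model Frobenius algebra says nothing about $Kh(L_m)$. You acknowledge both gaps yourself, which is honest, but it means the proposal establishes nothing beyond the base case.

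That said, the diagnostic part of your write-up is sound and worth keeping in mind: you correctly observe that the integral K\"unneth theorem applied to a tensor product over $\mathbb{Z}$ can never raise torsion order (since $\mathbb{Z}_{3}\otimes\mathbb{Z}_{3^k}$ and $\mathrm{Tor}(\mathbb{Z}_3,\mathbb{Z}_{3^k})$ are both $\mathbb{Z}_3$), so any proof must exploit the fact that the connected-sum complex is a tensor product over the Frobenius algebra $A=\mathbb{Z}[X]/(X^2)$, whose higher $\mathrm{Tor}^A_i$ do not vanish. This is consistent with the paper's own evidence that connected sums genuinely raise torsion order (e.g.\ $T(5,6)\,\#\,T(5,6)$ acquiring $\mathbb{Z}_9$). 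If you want to turn the sketch into mathematics, the place to start is an explicit finite model: identify the minimal $A$-module subquotient of the complex of $(\sigma_1\sigma_2\sigma_3)^4\sigma_1\sigma_2$ responsible for its $\mathbb{Z}_3$, compute its $m$-fold derived tensor power over $A$ by hand, and only then worry about whether that subquotient is a natural direct summand. As it stands, the statement remains open.
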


\hspace*{25 pt}T(2,3), in the above conjecture, can be replaced with a copy of $(\sigma_1\sigma_2\sigma_3)^4\sigma_1\sigma_2$. However, the trefoil knot is chosen as it has a lower crossing number. In general, the computational complexity of KH is exponential. However, from some experience it is clear that for the KH of certain families of links, the computational complexity seems to be smaller. It may be useful to study this under a general framework when looking to predict the presence of larger torsion subgroups in KH in a consistent way.  Moreover, it is still unknown how non-$\mathbb{Z}_2$ torsion subgroups appear in KH. The note is concluded with the following question in relation to \cite{Wat}.

\begin{question}\
	
\hspace*{25pt}Are there families of knots and links having the same free part but different torsion subgroups in their KH? 
	
\end{question}

\section*{Acknowledgements}

\hspace*{25 pt}The computational data in this note was obtained using {\bf JavaKh}-v2 written by Scott Morrison. It is an update of Jeremy Green's {\bf JavaKh}-v1 written under the supervision of Dror Bar-Natan. The author would like to thank Adam Lowrance, J\'{o}zef H. Przytycki, Radmila Sazdanovi\'{c}, and Alexander N. Shumakovitch for their useful comments and suggestions on this project.

\

\hspace*{25 pt}The author is grateful to the Columbian College of Arts and Sciences at the George Washington University for the Dean's Dissertation Completion Fellowship.

\end{document}